
\documentclass[a4paper]{article}
\usepackage{amsmath,amsfonts,amssymb,amsthm}
\usepackage[english]{babel}
\usepackage{color}
\usepackage{tikz}

\setcounter{MaxMatrixCols}{10}



\textwidth14.5cm  \textheight21cm \voffset -24mm \topmargin2cm
\oddsidemargin+0.1cm \evensidemargin0.1cm


\newcommand{\cB}{{\mathcal B}}
\newcommand{\cD}{{\mathcal D}}

\newcommand{\cH}{{\mathcal H}}

\newcommand{\cO}{{\mathcal O}}

\newcommand{\cS}{{\mathcal S}}

\newcommand{\myspace}{\qquad\qquad\qquad}

\newtheorem{theorem}{Theorem}[section]

\newtheorem{proposition}[theorem]{Proposition}
\newtheorem{remark}[theorem]{Remark}
\newtheorem{remarks}[theorem]{Remarks}

\newtheorem{problem}[theorem]{Problem}
\numberwithin{equation}{section}

\begin{document}

\title{\textbf{{\Large Exponential decay properties of a mathematical model
for a certain fluid-structure interaction{\thanks{
The research of G.~Avalos was partially supported by the NSF~Grants
DMS-0908476 and DMS-1211232. The research of F.~Bucci was partially
supported by the Italian MIUR under the PRIN~2009KNZ5FK Project (\emph{\
Metodi di viscosit\`a, geometrici e di controllo per modelli diffusivi
nonlineari}), by the GDRE (Groupement De Recherche Europ\'een) CONEDP (\emph{%
\ Control of PDEs}), and also by the Universit\`a degli Studi di Firenze
under the Project \emph{Calcolo delle variazioni e teoria del controllo}.} }}%
}}
\author{ George Avalos \\
{\normalsize {University of Nebraska-Lincoln}}\\
{\small {Lincoln NE, U.S.A.}}\\
{\small \texttt{gavalos@math.unl.edu}} \and Francesca Bucci\thanks{%
F.~Bucci is member of the Gruppo Nazionale per l'Analisi Matematica, la
Probabilit\`a e le loro Applicazioni (GNAMPA) of the Istituto Nazionale di
Alta Matematica (INdAM).} \\
{\small {Universit\`a degli Studi di Firenze}}\\
{\small {Firenze, ITALY}}\\
{\small \texttt{francesca.bucci@unifi.it} }}

\date{}

\maketitle

\begin{abstract}
In this work, we derive a result of exponential stability for a coupled system of partial differential equations (PDEs) which governs a certain fluid-structure interaction. 
In particular, a three-dimensional Stokes flow interacts across a boundary interface 
with a two-dimensional mechanical plate equation. 
In the case that the PDE plate component is rotational inertia-free, one will have that 
solutions of this fluid-structure PDE system exhibit an exponential rate of decay. 
By way of proving this decay, an estimate is obtained for the resolvent of the associated 
semigroup generator, an estimate which is uniform for frequency domain values along $i\mathbb{R}$.
Subsequently, we proceed to discuss relevant point control and boundary control scenarios for 
this fluid-structure PDE model, with an ultimate view to optimal control studies on both
finite and infinite horizon. 
(Because of said exponential stability result, optimal control of the PDE on time interval 
$(0,\infty)$ becomes a reasonable problem for contemplation.)  
\end{abstract}


\section*{Introduction}
In this work, we undertake a stability analysis of a certain partial
differential equation (PDE) system, that is \eqref{e:pde-model}-\eqref{ic} below, 
which has been previously studied in \cite{chu-ryz} and \cite{chambolle},
among other works, inasmuch as it simultaneously constitutes a 
mathematically interesting and physically relevant model of a
fluid-structure (F-S) dynamics. 
This PDE model comprises a Stokes flow, evolving within a three-dimensional cavity $\mathcal{O}$, coupled via a boundary interface, to a two dimensional Euler-Bernoulli or Kirchhoff plate which
displaces upon a bounded open set $\Omega$, which is taken to be a portion
of the cavity boundary $\partial \mathcal{O}$. 
Our main result here (Theorem~\ref{t:main} below) is the derivation of \emph{exponential decay} rates for the composite fluid-structure dynamics, in the case that the Euler-Bernoulli plate PDE
model -- i.e. the one corresponding to ``rotational inertia'' parameter $\rho =0$ in \eqref{1} -- is used to describe the mechanical displacements along $\Omega$. 

In point of fact, this stability result was originally given in \cite{chu-ryz}; 
the real novelty in the present work lies in the method of proof: whereas in \cite{chu-ryz}
the exponential decay of the given fluid-structure dynamics is obtained via a Lyapunov
functional approach, with the authors of \cite{chu-ryz} operating strictly
within the \emph{time domain}, the present work is centered upon working
instead in the \emph{frequency domain}. 
In particular, we work to attain a uniform estimate for the resolvent operator of the generator 
of the associated fluid-structure semigroup, as it assumes values along the
imaginary axis. With such resolvent estimate in hand, we can then appeal to
a well-known resolvent criterion of Pr\"{u}ss (posted as Theorem~\ref{t:pruess} here),
so as to ultimately infer exponential decay. 
The virtue of the frequency domain approach which is employed here, is that it can eventually 
be adapted so as to treat the case $\rho >0$ (Kirchhoff plate). 

Indeed, the frequency domain methodology outlined here is invoked and refined in \cite%
{avalos-bucci-arxiv2013}, so as to provide rational decay rates for
Stokes-Kirchhoff plate dynamics. 
(The higher topology for the mechanical velocity component in Kirchhoff plate equation -- viz., 
$H^{1}(\Omega )$ for the Kirchhoff plate, as opposed to $L^{2}(\Omega )$ for Euler-Bernoulli --
prevents the attainment of exponential decay in \cite{avalos-bucci-arxiv2013}. 
Hence, weaker polynomial rates of decay are established for $\rho >0$.) 
We should also state that our estimate of said resolvent on the imaginary axis
is direct and explicit, in the style of what was performed in \cite{avalos-trigg02};
previous exponential stability works which are geared so as to eventually invoke said resolvent criterion of Pr\"{u}ss, tend to obtain the requisite resolvent estimates via an argument by contradiction (see, e.g., \cite{liu2}).

\medskip
In addition, in the final Section of this work we offer some insight into a
further analysis which is needed to pursue solvability of
natural/appropriate optimal control problems (with quadratic functionals)
associated with the PDE system under investigation. We note that a full
understanding of the stability properties of this F-S interaction is not
only of intrinsic interest, but indeed a prerequisite step in the study of
optimal control problems over an \emph{infinite time} horizon. In this
respect, the uniform (exponential) stability result established for the
composite PDE system \eqref{e:pde-model} in the presence of an elastic
equation of Euler-Bernoulli type 
ensures that  \emph{both finite and infinite} time horizon problems are equally 
valid objects of investigation. 
Instead, when the elastic equation is of Kirchhoff type, the (rational)
decay rates of solutions shown in \cite{avalos-bucci-arxiv2013} will prevent
us from taking into consideration optimal control problems on an infinite time interval.

We also point out here that the presence of boundary or point control
actions into the model, will necessitate a careful technical analysis of the
regularity properties of the \emph{kernel} term $e^{At}B$ of the (so called)
``input-to-state map'' for the abstract controlled equation $y^{\prime }=Ay+Bg$ 
corresponding to the controlled boundary value problem. 
This is particularly so, inasmuch as, in relevant applications the control operator $B$ 
is (intrinsically) not bounded from the state space into the control space.

In turn, as is well known, sharp PDE regularity estimates for the solutions to the
``free'' (or uncontrolled) system should be instrumental in bringing about the 
sought regularity properties of the operator $e^{At}B$. 
It is only by having at hand such information on the sharp regularity of the 
fluid-structure PDE under present study, that one can verify whether the recent 
results on the LQ-problem and Riccati equations for abstract dynamics inspired by 
and tailored for coupled PDE systems of hyperbolic/parabolic type (such as 
\cite{abl-2005,abl-2013}) are applicable, or whether novel theories need to be devised. 
A brief description of a couple of relevant scenarios for the placement of control
functions in the model is given, along with some remarks about the technical
challenges which are expected.

\section{The PDE model, statement of the main result}

In what follows, the same geometry which prevailed in \cite{chu-ryz} will
obtain here. Namely: (fluid) domain $\mathcal{O}\subset \mathbb{R}^{3}$ will
be bounded with sufficiently smooth boundary $\partial \mathcal{O}$.
Moreover, $\partial \mathcal{O}=\overline{S}\cup \overline{\Omega}$, with 
$S \cap \Omega=\emptyset$, $\Omega$ being a flat portion of $\partial \mathcal{O}$.
In particular, $\partial \mathcal{O}$ has the following specific spatial configuration: 
\begin{equation*}
\Omega \subset \left\{ x=(x_{1,}x_{2},0)\right\} \,,\quad S\subset \left\{
x=(x_{1,}x_{2},x_{3}):x_{3}\leq 0\right\} \,.
\end{equation*}
So if $\nu (x)$ denotes the unit normal vector to $\partial \mathcal{O}$,
pointing outward, then 
\begin{equation}  \label{normal}
\nu|_{\Omega }=\left[ 0,0,1\right]\,. 
\end{equation}
See, e.g., the picture below.

\smallskip 
\begin{center}
\begin{tikzpicture}[scale=1.5]

\draw [fill=lightgray,lightgray] (.6,.3) rectangle (1.6,2.3);

\draw [fill=lightgray,lightgray] (0,0) rectangle (1,2);

\draw[fill=lightgray,lightgray] (0,0)--(1,0)--(1.6,.3)--(.6,.3);

\draw [thick, fill=gray] (0,2)--(1,2)--(1.6,2.3)--(.6,2.3)--(0,2);

\node at (.8,1) {$\mathcal{O}$};

\node at (1.7,1.2) {$S$};

\node at (.8,2.15) {$\Omega$};

\draw  (0,0)--(0,2)--(1,2)--(1,0)--(0,0);

\draw (0,2)--(.6,2.3)--(1.6,2.3)--(1,2);

\draw (1.6,.3)--(1,0);

\draw [dashed] (0,0)--(.6,.3)--(1.6,.3);

\draw (0,0)--(0,2);

\draw (1,0)--(1,2);

\draw [dashed](.6,2.3)--(.6,.3);

\draw (1.6,.3)--(1.6,2.3);

\end{tikzpicture}
\end{center}

\medskip
On such geometry, the PDE model is as as follows, with rotational inertia parameter
$\rho \ge 0$, and in solution variables $u(x,t)=[u^{1}(x,t),u^{2}(x,t),u^{3}(x,t)]$ 
and $[w(x,t),w_{t}(x,)]$:

\begin{subequations} \label{e:pde-model}
\begin{align}
& u_{t}-\Delta u+\nabla p=0 & & \text{in}\;\mathcal{O}\times (0,T)  \label{3}
\\
& \mathrm{div}(u)=0 & & \text{in}\;\mathcal{O}\times (0,T)  \label{4} \\
& u=0\;\; & & \text{on }\,S  \label{5-s} \\
& u=[u^{1},u^{2},u^{3}]=[0,0,w_{t}] & & \text{on }\,\Omega \,,
\label{5-omega} \\
& w_{tt}-\rho \Delta w_{tt}+\Delta ^{2}w=p|_{\Omega } & & \text{in}\;\Omega
\times (0,T)  \label{1} \\
& w=\frac{\partial w}{\partial \nu }=0 & & \text{on}\;\partial \Omega
\label{2}
\end{align}
with initial conditions 
\end{subequations}
\begin{equation}
[u(0),w(0),w_{t}(0)]=[u_{0},w_{0},w_{1}]\in \mathbf{H}_\rho\,.
\label{ic}
\end{equation}
Here, the space of initial data $\mathbf{H}_\rho$ is defined as follows: 
Let the (fluid) space $\mathcal{H}_{fluid}$ ($\mathcal{H}_f$, in short) be defined by
\begin{equation}
\mathcal{H}_f:=\left\{ f\in \mathbf{L}^{2}(\mathcal{O}):
\mathrm{div}(f)=0\text{; }\left. f\cdot \nu \right\vert _{S}=0\right\}\,,
\label{H_f}
\end{equation}
and let 
\begin{equation} \label{velocity}
V_{\rho }=
\begin{cases}
L^2(\Omega)/\mathbb{R} & \text{if $\rho =0$} 
\\[1mm]
H_0^1(\Omega )\cap L^2(\Omega)/\mathbb{R} & \text{if $\rho>0$.}
\end{cases}
\end{equation}
Therewith, we then set 
\begin{equation} \label{energy}
\begin{split}
\mathbf{H}_{\rho} &=\Big\{\big[f,h_0,h_1\big] \in 
\mathcal{H}_f \times \big[H_0^2(\Omega )\cap L^{2}(\Omega)/\mathbb{R}\big] 
\times V_{\rho }\,,
\nonumber \\
& \myspace \text{with }\; f\cdot \nu|_{\Omega}=[0,0,f^3]\cdot [0,0,1]=h_1\Big\}\,.  
\end{split}
\end{equation}
In this paper, we shall focus on the case $\rho =0$.

\smallskip
In addition: By way of constructing an abstract operator 
$A_{\rho}:\cD(A_{\rho })\subset \mathbf{H}_\rho\rightarrow \mathbf{H}_\rho$ which describes
the PDE dynamics \eqref{e:pde-model}-\eqref{ic}, we denote 
$A_D:L^2(\Omega)\rightarrow L^2(\Omega)$ by 
\begin{equation} \label{dirichlet}
A_Dg=-\Delta g\,, \qquad \cD(A_{D})=H^2(\Omega )\cap H_0^1(\Omega)\,.
\end{equation}
If we subsequently make the denotation for all $\rho \ge 0$
\begin{equation} \label{P}
P_\rho=I+\rho A_{D}\,, \quad \cD(P_\rho)=
\begin{cases}
L^2(\Omega ) & \text{if $\rho =0$} 
\\ 
\cD(A_D) & \text{if $\rho =0$} 
\end{cases},
\end{equation}
then the mechanical PDE component \eqref{1} of the system \eqref{e:pde-model} can be written as 
\begin{equation*}
P_\rho w_{tt}+\Delta ^{2}w=\left. p\right\vert _{\Omega } \quad \text{on $(0,T)$.}
\end{equation*}

Using the fact from \cite{grisvard} that
\begin{equation*}
\cD(P_{\rho }^{1/2})=
\begin{cases}
L^2(\Omega) & \text{if $\rho =0$} 
\\ 
H_0^1(\Omega ) & \text{if $\rho >0$} 
\end{cases},
\end{equation*}
then we can endow the Hilbert space $\mathbf{H}_{\rho }$ with the norm-inducing
inner product
\begin{equation*}
\big([\mu_0,\omega_1,\omega_2],
[\tilde{\mu}_{0},\tilde{\omega}_1,\tilde{\omega}_2]\big)_{\mathbf{H}_{\rho}}=
(\mu_0,\tilde{\mu}_{0})_{\cO}+(\Delta \omega_{1},\Delta \tilde{\omega}_{1})_{\Omega }
+(P_{\rho }^{1/2}\omega_{2},P_{\rho }^{1/2}\tilde{\omega}_{2})_{\Omega }\,,
\end{equation*}
where $(\cdot,\cdot)_{\mathcal{O}}$ and $(\cdot,\cdot)_{\Omega }$ are
the $L^{2}$-inner products on their respective geometries.

\smallskip
Moreover, as was done in \cite{avalos-clark} and \cite[Lemma~1.1]{avalos-bucci-arxiv2013}, so as to eliminate the pressure term $p$ in \eqref{e:pde-model}-\eqref{ic} (see also \cite{dvorak} for an analogous elimination for a different fluid-structure PDE model), we recognize the
pressure term as the solution of the following BVP, pointwise in time: 
\begin{equation} \label{e:bvp-for-pressure}
\begin{cases}
\Delta p=0 & \text{in }\;\mathcal{O} 
\\[1mm] 
\frac{\partial p}{\partial \nu }=\Delta u\cdot \nu \big|_{S} & \text{on $S$}
\\[1mm] 
\frac{\partial p}{\partial \nu }+p=\Delta ^{2}w+\Delta u^{3}\big|_{\Omega }
& \text{on $\Omega $.}
\end{cases}
\end{equation}

To ``solve'' for the pressure term, we then invoke appropriate (`Robin') maps $R_{\rho}$ and $\tilde{R}_{\rho}$ defined as follows:
\begin{equation*}
\begin{aligned} 
R_{\rho }g &= f\Longleftrightarrow \Big\{ \Delta f=0\text{ \
in }\mathcal{O}\,, \; \frac{\partial f}{\partial \nu }+P_{\rho}^{-1}f=g\text{ \ on } \Omega\,, \; \frac{\partial f}{\partial \nu}=0\text{\ on }S\Big\}\,; 
\\ 
\tilde{R}_{\rho }g &=f\Longleftrightarrow
\Big\{ \Delta f=0\text{ \ in } \mathcal{O}\,, \; \frac{\partial f}{\partial
\nu }+P_{\rho }^{-1}f=0\text{\ on }\Omega\,, \; \frac{\partial f}{\partial
\nu }=g\text{ on } S\Big\}\,. 
\end{aligned}
\end{equation*}
Therewith, we have that for all real $s$,
\begin{equation}
R_{\rho }\in \mathcal{L}\big(H^{s}(\Omega ),H^{s+3/2}(\mathcal{O})\big)\,; 
\quad 
\tilde{R}_{\rho }\in \mathcal{L}\big(H^{s}(S),H^{s+3/2}(\mathcal{O})\big)\,.
\end{equation}
(We are also using implicity the fact that $P_{\rho }^{-1}$ is positive
definite and self-adjoint on $\Omega$.)
Consequently, the pressure variable $p(t)$, as necessarily the solution of 
\eqref{e:bvp-for-pressure} -- that is an appropriate {\em harmonic extension} 
from the boundary of $\cO$ into the interior -- can be written pointwise in 
time as 
\begin{equation} \label{p_2}
p(t)=G_{\rho ,1}(w(t))+G_{\rho ,2}(u(t))\,,  
\end{equation}
where 
\begin{subequations} \label{G}
\begin{align} 
G_{\rho ,1}(w) &=R_{\rho }(P_{\rho }^{-1}\Delta ^{2}w)\,; 
\\
G_{\rho ,2}(u) &=R_{\rho }(\left. \Delta u^{3}\right\vert _{\Omega })+
\tilde{R}_{\rho }(\left. \Delta u\cdot \nu \right\vert_{S})\,.  
\end{align}
\end{subequations}

These relations suggest the following choice for the generator 
$A_{\rho }:\mathbf{H}_{\rho }\rightarrow \mathbf{H}_{\rho }$. 
We set

\begin{equation} \label{domain}
A_{\rho }\equiv 
\begin{bmatrix}
\Delta -\nabla G_{\rho ,2} & -\nabla G_{\rho ,1} & 0 \\ 
0 & 0 & I \\ 
P_{\rho }^{-1}G_{\rho ,2}\big|_{\Omega } & -P_{\rho }^{-1}\Delta
^{2}+P_{\rho }^{-1}G_{\rho ,1}\big|_{\Omega } & 0
\end{bmatrix}  
\end{equation}
with domain 
\begin{equation} \label{e:domain-of-generator}
\begin{split}
{\cD}(A_{\rho })& =\Big\{\big[u,w_{1},w_{2}\big]\in \mathbf{H}_{\rho}:
\; u\in \mathbf{H}^2(\mathcal{O})\,; 
\quad w_1\in {\cS}_{\rho}\,,\; w_2\in H_0^2(\Omega )\,,
\\[1mm]
& \myspace u=0\; \text{on}\;S\,,\quad u=(0,0,w_2)\; \text{on}\;\Omega\,\Big\}\,,
\end{split}
\end{equation}
where the mechanical displacement space, denoted by $\cS_\rho$, changes with 
$\rho$ as follows: 
\begin{equation*}
\cS_{\rho }:=
\begin{cases}
H^{4}(\Omega )\cap H_{0}^{2}(\Omega ) & \rho =0 
\\ 
H^{3}(\Omega )\cap H_{0}^{2}(\Omega ) & \rho >0\,.
\end{cases}
\end{equation*}

\smallskip
We note also, from the definition of $\cD(A_{\rho })$ that 
$[u,w_{1},w_{2}] \in \cD(A_{\rho })$ implies $\Delta u\in \mathbf{L}^2(\cO)$ and 
$\text{div}\Delta u=0$. 
Consequently, from elementary Stokes Theory (see, e.g., \cite[Proposition~1.4, p.~5]{const}, 
we have
\begin{equation*}
\|\Delta u\cdot \nu\|_{H^{-1/2}(\partial \cO)} \le C\|u\|_{\mathbf{H}^{2}(\cO)}
\le C \|[u,w_{1},w_{2}]\|_{\cD(A_{\rho })}  
\end{equation*}
and so associated pressure $\pi_0$ satisfies
\begin{equation}\label{p_reg}
\pi_0 \equiv G_{\rho,1}(w_{1})+G_{\rho ,2}(u)\in H^{1}(\mathcal{O})\,.  
\end{equation}

\medskip

\begin{remarks}
\begin{rm}
\textbf{(i) (Well-posedness) } 
Well-posedness of the (\emph{linear}) coupled system \eqref{e:pde-model}-\eqref{ic} when 
$\rho =0$ -- namely, when the elastic equation is of Euler-Bernoulli type, of specific concern 
in the present investigation --, was originally established in \cite{chu-ryz}, by
using Galerkin approximations. 
An alternative proof of well-posedness which encompasses both cases $\rho =0$
and $\rho >0$ has been recently given in \cite{avalos-clark}. 
It is important to emphasize that the proof appeals to the Lumer-Phillips Theorem
within classical semigroup theory, and yet also utilizes in a crucial and
nontrivial way the Babu\v{s}ka-Brezzi~Theorem (see, e.g., \cite[p.~116]{kesavan}). 
The corresponding statement is given below. 

\begin{theorem}[\protect\cite{avalos-clark}] \label{t:well} 
The operator $A_{\rho }:\mathbf{H}_{\rho }\rightarrow \mathbf{H}_{\rho }$ 
defined by \eqref{domain}-\eqref{e:domain-of-generator}
generates a $C_{0}$-semigroup of contractions $\left\{ e^{A_{\rho}t}\right\}_{t\ge 0}$ 
on $\mathbf{H}_{\rho }$. 
Thus, for any $[u_{0},w_{0},w_{1}]\in \mathbf{H}_{\rho }$, the (unique) weak solution to
the initial/boundary value problem \eqref{e:pde-model}-\eqref{ic} is given by 
\begin{equation} \label{e:semigroup}
\left[ 
\begin{array}{c}
u(t) \\ 
w(t) \\ 
w_{t}(t)
\end{array}
\right] =e^{A_{\rho }t}\left[ 
\begin{array}{c}
u_{0} \\ 
w_{0} \\ 
w_{1}
\end{array}
\right] \in C([0,T];\mathbf{H}_{\rho })\,.  
\end{equation}
\end{theorem}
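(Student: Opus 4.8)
The plan is to invoke the Lumer--Phillips theorem: I would show that $A_{\rho}$ is dissipative, that $\lambda I - A_{\rho}$ maps $\cD(A_{\rho})$ onto $\mathbf{H}_{\rho}$ for some (equivalently, every) $\lambda>0$, and that $\cD(A_{\rho})$ is dense in $\mathbf{H}_{\rho}$. The representation \eqref{e:semigroup} then follows by first solving \eqref{e:pde-model}--\eqref{ic} strongly for data in $\cD(A_{\rho})$ and then passing to the limit by density, with uniqueness a consequence of dissipativity.

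For dissipativity, I would fix $[u,w_{1},w_{2}]\in\cD(A_{\rho})$ and expand $\mathrm{Re}\,(A_{\rho}[u,w_{1},w_{2}],[u,w_{1},w_{2}])_{\mathbf{H}_{\rho}}$ using the three pieces of the inner product. Green's identities on $\cO$ (for the fluid component $\Delta u-\nabla\pi_{0}$, with $\pi_{0}$ as in \eqref{p_reg}) and on $\Omega$ (for the biharmonic term, using $w_{1}=\partial_{\nu}w_{1}=0$ and $w_{2}\in H_{0}^{2}(\Omega)$) produce several boundary integrals over $\partial\cO=\overline{S}\cup\overline{\Omega}$. On $S$ these vanish since $u=0$; on $\Omega$, the condition $u=(0,0,w_{2})$ together with $\mathrm{div}\,u=0$ forces $\partial_{\nu}u^{3}|_{\Omega}=0$, so the only surviving interface contributions are two copies of $\mathrm{Re}\,(\pi_{0}|_{\Omega},w_{2})_{\Omega}$ carrying opposite signs — one coming from the pressure gradient in the fluid equation, the other from the $P_{\rho}^{-1}\pi_{0}|_{\Omega}$ entry of the third row of $A_{\rho}$ — which cancel. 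What remains is $\mathrm{Re}\,(A_{\rho}[u,w_{1},w_{2}],[u,w_{1},w_{2}])_{\mathbf{H}_{\rho}}=-\|\nabla u\|_{\mathbf{L}^{2}(\cO)}^{2}\le0$. The regularity $\pi_{0}\in H^{1}(\cO)$ recorded just above the theorem is what legitimizes these integrations by parts.

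For the range condition, I would fix $\lambda>0$ and $[f,g_{1},g_{2}]\in\mathbf{H}_{\rho}$ and seek $[u,w_{1},w_{2}]\in\cD(A_{\rho})$ solving $(\lambda I-A_{\rho})[u,w_{1},w_{2}]=[f,g_{1},g_{2}]$. The middle equation gives $w_{2}=\lambda w_{1}-g_{1}$, and substituting this into the first and third equations — reinstating the pressure $p$ via \eqref{p_2} — turns the problem into a stationary Stokes-type system in $\cO$, subject to $\mathrm{div}\,u=0$ and the interface coupling $u|_{\Omega}=(0,0,w_{2})$, driven on $\Omega$ by the fourth-order plate operator $\Delta^{2}w_{1}$. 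I would pose this as a mixed (saddle-point) variational problem on the space of admissible velocity/displacement pairs that are divergence-free and satisfy the interface trace condition: the bilinear form is coercive there, the coercivity resting on $\lambda>0$ together with the Stokes Dirichlet form $\|\nabla u\|_{\cO}^{2}$ and the plate energy $\|\Delta w_{1}\|_{\Omega}^{2}$, while the pressure $p\in L^{2}(\cO)/\mathbb{R}$ appears as the Lagrange multiplier dual to the constraint $\mathrm{div}\,u=0$. Producing that multiplier — i.e.\ recovering the pressure from the weak formulation — is precisely where the Babu\v{s}ka--Brezzi inf--sup condition enters. Once $p$ is in hand, elliptic (Stokes and biharmonic) regularity bootstraps the weak solution up to $u\in\mathbf{H}^{2}(\cO)$ and $w_{1}\in\cS_{\rho}$, placing $[u,w_{1},w_{2}]$ in $\cD(A_{\rho})$.

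Finally, density of $\cD(A_{\rho})$ in $\mathbf{H}_{\rho}$ I would verify directly, approximating a generic triple by smoother ones that retain the sole essential compatibility $f\cdot\nu|_{\Omega}=h_{1}$ and the vanishing of the velocity on $S$. The Lumer--Phillips theorem then delivers the contraction semigroup $\{e^{A_{\rho}t}\}_{t\ge0}$, and the identification \eqref{e:semigroup} with its $C([0,T];\mathbf{H}_{\rho})$ regularity follows in the standard way. I expect the main obstacle to be the range condition: organizing the coupled stationary system as a well-posed saddle-point problem and verifying the inf--sup condition on the constrained space, while checking that the fluid-structure interface terms neither spoil coercivity nor obstruct the pressure recovery.
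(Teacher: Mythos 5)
Your proposal takes essentially the same route as the proof the paper relies on: Theorem~\ref{t:well} is quoted from \cite{avalos-clark}, and the paper explicitly describes that proof as an application of the Lumer--Phillips theorem in which the maximality (range) condition is resolved ``in a crucial and nontrivial way'' via the Babu\v{s}ka--Brezzi theorem for the coupled stationary Stokes--plate saddle-point problem, which is precisely your plan. Your dissipativity computation, including the cancellation of the two interface pressure terms and the conclusion $\mathrm{Re}\,(A_{\rho}y,y)_{\mathbf{H}_{\rho}}=-\|\nabla u\|_{\mathbf{L}^{2}(\mathcal{O})}^{2}$, is also consistent with the identity \eqref{e:dissipativity} recorded later in the paper.
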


\smallskip 
\noindent 
\textbf{(ii) (Decay rates) } 
To the authors' knowledge, the stability properties of solutions to the \emph{linear} model 
\eqref{e:pde-model} (again, when $\rho =0$) have been explored in \cite{chu-ryz}, along the analysis of the long-term behaviour of a \emph{nonlinear} coupled dynamics, comprising a $3$D linearized Navier-Stokes system for the fluid velocity field in a bounded domain, and a 
\emph{nonlinear} elastic plate equation for the transversal displacement of a flat flexible part 
of the boundary. 
Among the various results established in \cite{chu-ryz}, primarily pertaining to the nonlinear model, exponential stability of the linear dynamics is attained by using Lyapunov function arguments; see \cite[Section~3]{chu-ryz}.  
\end{rm}
\end{remarks}

\medskip 
We aim here at presenting a different proof of exponential
stability, based instead on a (by now classical) resolvent criterion by Pr\"{u}ss; 
see Theorem~\ref{t:pruess} in the next Section. 
The adoption of a ``frequency domain'' approach -- in contrast with the more commonly invoked 
``time domain'' analysis -- is not only of intrinsic interest, but it also proves to be very effective in order to establish the decay rates of solutions, \emph{even when exponential stability fails}. 
Indeed, in the case $\rho >0$, the very same frequency domain approach enables us to establish
that the energy of strong solutions decays at the rate of $O(1/t)$ , as 
$t\rightarrow +\infty$; see \cite{avalos-bucci-arxiv2013}.

The main result of the present work is stated below.


\begin{theorem}[\textbf{Exponential decay rates}] \label{t:main} 
Let the rotational inertia parameter $\rho =0$ in \eqref{1}.
Then all finite energy solutions of \eqref{e:pde-model}-\eqref{ic} decay at
an exponential rate. Namely, there exist constants $\omega >0$ and $M\ge 1$
such that for arbitrary initial data $[u_{0},w_{0},w_{1}]\in \mathbf{H_{0}}$, 
the corresponding solutions $[u,w,w_{t}]$ of \eqref{e:pde-model}-\eqref{ic}
satisfy 
\begin{equation*}
\|[u(t),w(t),w_{t}(t)]\|_{\mathbf{H}_{0}}\le
M\,e^{-\omega t}\,\|[u_{0},w_{0},w_{1}]\|_{\mathbf{H}_0}\,.
\end{equation*}
\end{theorem}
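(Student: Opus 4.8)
\textbf{Proof strategy for Theorem~\ref{t:main}.}
The plan is to invoke the Pr\"{u}ss resolvent criterion (Theorem~\ref{t:pruess}): since $\{e^{A_0 t}\}_{t\ge 0}$ is a contraction semigroup on the Hilbert space $\mathbf{H}_0$, exponential decay is equivalent to the two conditions $i\mathbb{R}\subset \rho(A_0)$ and $\sup_{\beta\in\mathbb{R}}\|(i\beta I-A_0)^{-1}\|_{\mathcal{L}(\mathbf{H}_0)}<\infty$. The first condition will follow once we establish the uniform bound on a neighborhood of $\beta=0$ together with the compactness of the resolvent (which gives that $i\mathbb{R}\cap\sigma(A_0)$ can consist only of eigenvalues, and then a unique continuation / energy argument rules those out); but the heart of the matter is the uniform bound for large $|\beta|$. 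So the core step is: given $[f,g_1,g_2]\in\mathbf{H}_0$ and $\beta\in\mathbb{R}$, solve $(i\beta I-A_0)[u,w_1,w_2]=[f,g_1,g_2]$ and show $\|[u,w_1,w_2]\|_{\mathbf{H}_0}\le C\|[f,g_1,g_2]\|_{\mathbf{H}_0}$ with $C$ independent of $\beta$.

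First I would write out the resolvent equation componentwise. It reads, in the PDE formulation, $i\beta u-\Delta u+\nabla\pi=f$ in $\mathcal{O}$, $\operatorname{div}u=0$, $u=0$ on $S$, $u=(0,0,w_2)$ on $\Omega$, together with $i\beta w_1-w_2=g_1$ and $i\beta w_2+\Delta^2 w_1=\pi|_\Omega+g_2$ on $\Omega$, where $\pi$ is the harmonic-extension pressure $G_{0,1}(w_1)+G_{0,2}(u)$. The natural first move is the dissipation identity: taking the $\mathbf{H}_0$ inner product of the resolvent equation with $[u,w_1,w_2]$ and using $\operatorname{Re}(A_0 z,z)_{\mathbf{H}_0}=-\|\nabla u\|_{L^2(\mathcal{O})}^2$ (this is the contractivity computation underlying Theorem~\ref{t:well}), one gets immediately
\begin{equation*}
\|\nabla u\|_{L^2(\mathcal{O})}^2=\operatorname{Re}\big([f,g_1,g_2],[u,w_1,w_2]\big)_{\mathbf{H}_0}\le \|[f,g_1,g_2]\|_{\mathbf{H}_0}\,\|[u,w_1,w_2]\|_{\mathbf{H}_0}\,,
\end{equation*}
so the fluid gradient is controlled by $(\text{data})^{1/2}(\text{solution})^{1/2}$. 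By the trace theorem and the Dirichlet condition on $S$ this also controls $\|u\|_{H^1(\mathcal{O})}$ and hence $\|w_2\|_{H^{1/2}(\Omega)}$, in particular $\|w_2\|_{L^2(\Omega)}$ — which is exactly the $V_0$-norm of the mechanical velocity. The missing pieces are then $\|u\|_{L^2(\mathcal{O})}$ (already dominated by $\|\nabla u\|$ via Poincar\'e, since $u$ vanishes on $S$) and, crucially, $\|\Delta w_1\|_{L^2(\Omega)}$, i.e.\ control of the plate \emph{displacement} energy. To get that, I would test the plate equation $i\beta w_2+\Delta^2 w_1=\pi|_\Omega+g_2$ against $w_1$, producing $\|\Delta w_1\|_{L^2(\Omega)}^2 = i\beta(w_2,w_1)_\Omega+(\pi|_\Omega+g_2,w_1)_\Omega$; here one substitutes $i\beta w_1 = w_2+g_1$ to turn the troublesome $i\beta(w_2,w_1)$ term into $\|w_2\|_{L^2(\Omega)}^2+(w_2,g_1)$ (no loose power of $\beta$ survives), while the pressure term $(\pi|_\Omega,w_1)_\Omega$ is handled by recognizing $\pi$ as the harmonic extension and integrating by parts back into $\mathcal{O}$ — by the structure of $G_{0,1},G_{0,2}$ and the Robin map estimate $R_0\in\mathcal{L}(H^s(\Omega),H^{s+3/2}(\mathcal{O}))$, $(\pi|_\Omega,w_1)_\Omega$ reduces to terms involving $\|\nabla u\|$, the data, and $\|\Delta w_1\|$ itself with a small coefficient (absorbable). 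Combining, $\|\Delta w_1\|_{L^2(\Omega)}^2\le \varepsilon\|\Delta w_1\|^2+C_\varepsilon\big(\|\nabla u\|^2+\|w_2\|_{L^2}^2+\|[f,g_1,g_2]\|_{\mathbf{H}_0}^2\big)$, and feeding back the dissipation bound closes the estimate: $\|[u,w_1,w_2]\|_{\mathbf{H}_0}\le C\|[f,g_1,g_2]\|_{\mathbf{H}_0}$ uniformly in $\beta$.

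The main obstacle I anticipate is the pressure term. Unlike the wave/plate parts, $\pi$ is nonlocal — it is a harmonic extension whose boundary data on $\Omega$ involves $\Delta^2 w_1$ and on $S$ involves $\Delta u\cdot\nu$, the latter only making sense via the Stokes-theory trace estimate $\|\Delta u\cdot\nu\|_{H^{-1/2}(\partial\mathcal{O})}\le C\|u\|_{\mathbf{H}^2(\mathcal{O})}$ quoted after \eqref{p_reg}. For a genuine resolvent element one has $[u,w_1,w_2]\in\mathcal{D}(A_0)$, so $u\in\mathbf{H}^2(\mathcal{O})$ and $\pi\in H^1(\mathcal{O})$ by \eqref{p_reg}, which is what legitimizes the integrations by parts; but one must be careful that no $\mathbf{H}^2(\mathcal{O})$-norm of $u$ (which is \emph{not} part of the energy norm, and could carry hidden $\beta$-growth) enters the final bound — the duality pairing $\langle \Delta u\cdot\nu, (\text{harmonic ext.\ of }w_1)\rangle$ must be reorganized, via Green's identity in $\mathcal{O}$ and the equation $i\beta u-\Delta u+\nabla\pi=f$, so that only $\|\nabla u\|_{L^2(\mathcal{O})}$, $\|u\|_{L^2(\mathcal{O})}$, $\|\pi\|_{L^2(\mathcal{O})}$ and data appear; $\|\pi\|_{L^2(\mathcal{O})}$ in turn is bounded by $\|\Delta w_1\|_{L^2(\Omega)}$, the data and $\|\nabla u\|$ using the mapping properties of $R_0,\tilde R_0$ and elliptic regularity. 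Once the uniform bound on $i\mathbb{R}$ is in hand, $0\in\rho(A_0)$ (equivalently $A_0^{-1}$ exists and, being compactly embedded, is bounded) follows from the same elliptic estimates at $\beta=0$, $i\mathbb{R}\subset\rho(A_0)$ follows because any imaginary eigenvalue $i\beta$ with eigenfunction $[u,w_1,w_2]$ forces $\nabla u\equiv 0$ hence $u\equiv 0$ hence $w_2\equiv 0$ hence (from $i\beta w_1=w_2$) $w_1\equiv 0$ when $\beta\ne0$, and the $\beta=0$ case is excluded by the same token together with $\Delta^2 w_1=\pi|_\Omega$ and $\Delta^2 w_1$, $w_1\in H_0^2(\Omega)$; and then Theorem~\ref{t:pruess} delivers the exponential decay with the asserted constants $M\ge1$, $\omega>0$.
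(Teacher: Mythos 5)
Your overall architecture coincides with the paper's: Pr\"uss criterion, the dissipation identity $\|\nabla u\|_{L^2(\mathcal{O})}^2=\operatorname{Re}\big([u^*,w_1^*,w_2^*],[u,w_1,w_2]\big)_{\mathbf{H}}$, control of $w_2$ (and of $\beta w_1$ in $H^{1/2}(\Omega)$) through the interface condition and the trace theorem, and then testing the plate equation against $w_1$ to recover $\|\Delta w_1\|_{L^2(\Omega)}$. Steps 1 and 2 are essentially identical to the paper's.

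The gap is in the one step you yourself flag as the main obstacle: the pressure term $(\pi|_\Omega,w_1)_\Omega$. Your proposed mechanism --- represent $\pi=G_{0,1}(w_1)+G_{0,2}(u)$ and use the Robin-map bounds $R_0\in\mathcal{L}(H^s(\Omega),H^{s+3/2}(\mathcal{O}))$ to conclude that $\|\pi\|_{L^2(\mathcal{O})}$ is controlled by $\|\Delta w_1\|_{L^2(\Omega)}$, $\|\nabla u\|$ and data --- does not close at the energy level. At that level $\Delta^2 w_1\in H^{-2}(\Omega)$ only, so $R_0(\Delta^2 w_1)\in H^{-1/2}(\mathcal{O})$, not $L^2(\mathcal{O})$; and the terms $R_0(\Delta u^3|_\Omega)$, $\tilde R_0(\Delta u\cdot\nu|_S)$ are only controlled through $\|u\|_{\mathbf{H}^2(\mathcal{O})}$, which is not part of the energy norm and may grow with $\beta$. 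The alternative of estimating $\|\pi\|_{L^2(\mathcal{O})/\mathbb{R}}$ from $\nabla\pi=u^*-i\beta u+\Delta u$ fails for the same reason: it produces $|\beta|\,\|u\|_{L^2(\mathcal{O})}$, which is not uniformly bounded. The missing idea is the paper's auxiliary test function: one solves the \emph{Stokes} problem \eqref{BVP} for a divergence-free $\psi$ with $\psi|_S=0$, $\psi|_\Omega=(0,0,w_1)$, and $\|\nabla\psi\|_{\mathbf{L}^2(\mathcal{O})}\le C\|w_1\|_{H^{1/2}(\Omega)}$. Because $\operatorname{div}\psi=0$ and $\psi$ vanishes on $S$, Green's identities and the fluid equation give the exact cancellation of the pressure,
\begin{equation*}
(p|_\Omega,w_1)_\Omega=-i\beta(u,\psi)_{\mathbf{L}^2(\mathcal{O})}-(\nabla u,\nabla\psi)_{\mathbf{L}^2(\mathcal{O})}+(u^*,\psi)_{\mathbf{L}^2(\mathcal{O})}\,,
\end{equation*}
so no norm of $\pi$ and no $\mathbf{H}^2(\mathcal{O})$-norm of $u$ ever appears; the factor $\beta$ in the first term is absorbed because $|\beta|\,\|\nabla\psi\|\le C\|\beta w_1\|_{H^{1/2}(\Omega)}$, already controlled in \eqref{est2}. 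With that substitution your Step 3 closes exactly as in the paper; your remaining remarks (handling of $i\beta(w_2,w_1)$ via $i\beta w_1=w_2+g_1$, the connectedness argument giving $i\mathbb{R}\subset\varrho(A)$ from $0\in\varrho(A)$ plus the uniform a priori bound) are sound.
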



\section{Exponential Stability}

To show that the semigroup defined by \eqref{e:semigroup} is exponentially
stable, we appeal to a celebrated result of semigroup theory which we recall
explicitly for the reader's convenience.

\begin{theorem}[\protect\cite{pruess}] \label{t:pruess} 
Let $(T(t))_{t\ge 0}$ be a $C_{0}$-semigroup on a Hilbert space $H$ with generator $A$, 
such that $i\mathbb{R}\subset \varrho (A)$. 
Then, the following are equivalent: 
\begin{align*}
(i)\quad & \exists C>0: \; \|R(is;A)\|\le C \quad \forall s\in \mathbb{R}\,;
\\[1mm]
(ii)\quad & \exists \omega>0\,, \; M\ge 1\,: \; \|T(t)\|\le M e^{-\omega t}
\qquad t\ge 0\,.
\end{align*}
\end{theorem}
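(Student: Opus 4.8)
The plan is to prove the two implications separately, with essentially all of the substance lying in $(i)\Rightarrow(ii)$; the reverse is immediate. For $(ii)\Rightarrow(i)$ I would simply invoke the Laplace-transform representation of the resolvent: since $\|T(t)\|\le Me^{-\omega t}$, the integral $R(\lambda;A)=\int_0^\infty e^{-\lambda t}T(t)\,dt$ converges absolutely for every $\lambda$ with $\mathrm{Re}\,\lambda>-\omega$, in particular on $i\mathbb{R}$, where $\|R(is;A)\|\le\int_0^\infty\|T(t)\|\,dt\le M/\omega=:C$. This gives both $i\mathbb{R}\subset\varrho(A)$ and the uniform bound in $(i)$.

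For $(i)\Rightarrow(ii)$ the engine is the Plancherel theorem, which is available precisely because $H$ is a Hilbert space. First I would upgrade the hypothesis from the imaginary axis to a vertical strip: expanding $R(\,\cdot\,;A)$ in a Neumann series about each point $is$ shows that $\{\lambda:|\mathrm{Re}\,\lambda|\le\delta\}\subset\varrho(A)$ with $\|R(\lambda;A)\|\le 2C$ there, where $\delta=1/(2C)$. Next, for $\mu$ exceeding the growth bound $\omega_0(T)$, the function $t\mapsto e^{-\mu t}T(t)x$ lies in $L^2(0,\infty;H)$ with Fourier transform $s\mapsto R(\mu+is;A)x$, and Plancherel yields $\int_{\mathbb{R}}\|R(\mu+is;A)x\|^2\,ds=2\pi\int_0^\infty e^{-2\mu t}\|T(t)x\|^2\,dt$. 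The aim is to let $\mu\downarrow 0$ and, using the strip bound together with the elementary decay $\|R(\mu+is;A)x\|\le|s|^{-1}(\|x\|+2C\|Ax\|)$ valid for $x\in\mathcal{D}(A)$, conclude that $\int_0^\infty\|T(t)x\|^2\,dt<\infty$ for every $x$. Once finiteness holds pointwise, the closed-graph theorem promotes it to a uniform bound $\int_0^\infty\|T(t)x\|^2\,dt\le K\|x\|^2$; and since $\|R(is;A^\ast)\|=\|R(-is;A)\|\le C$, hypothesis $(i)$ passes to the adjoint semigroup, so the same bound holds for $T(\cdot)^\ast$.

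To finish I would combine the two square-integrability estimates. Writing $T(t)=T(t-s)T(s)$ gives, for $x,y\in H$, $t\,\langle T(t)x,y\rangle=\int_0^t\langle T(s)x,T(t-s)^\ast y\rangle\,ds$, whence by Cauchy--Schwarz $t\,|\langle T(t)x,y\rangle|\le K\|x\|\,\|y\|$ and therefore $\|T(t)\|\le K/t$. Hence $\|T(t_0)\|<1$ for $t_0$ large, and the semigroup relation $\|T(nt_0)\|\le\|T(t_0)\|^n$ forces exponential decay, which is exactly $(ii)$.

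The main obstacle is the limiting step in which the resolvent bound, a priori available only on lines $\mathrm{Re}\,\lambda>\omega_0(T)$ where the Laplace/Plancherel representation is valid, must be carried down onto the imaginary axis: a priori $\omega_0(T)\ge 0$ is not excluded, and if $A$ had spectrum in the open right half-plane the inverse-Laplace contour could not be shifted to $i\mathbb{R}$. Ruling this out --- equivalently, showing that the abscissa of uniform boundedness of the resolvent coincides with the growth bound --- is precisely the point at which the Hilbert-space structure is indispensable (the statement is \emph{false} on general Banach spaces), and I would secure it either by a Paley--Wiener/Hardy-space argument identifying $R(\,\cdot\,;A)x$ as an $H^2$-function on a right half-plane, or by reduction to Gearhart's discrete criterion via the spectral mapping theorem for the resolvent.
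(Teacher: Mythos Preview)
The paper does not prove this theorem. Theorem~\ref{t:pruess} is quoted verbatim from Pr\"uss \cite{pruess} and used as a black box (``we appeal to a celebrated result of semigroup theory which we recall explicitly for the reader's convenience''); the subsequent Section~2.2 applies the criterion, it does not establish it. So there is no ``paper's own proof'' to compare your proposal against.

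That said, your sketch is a faithful outline of the standard Gearhart--Pr\"uss--Huang argument. The easy direction $(ii)\Rightarrow(i)$ is handled correctly. For $(i)\Rightarrow(ii)$ your Plancherel-plus-Datko route is the classical one, and you have correctly put your finger on the genuine subtlety: the Parseval identity $\int_{\mathbb{R}}\|R(\mu+is;A)x\|^{2}\,ds = 2\pi\int_{0}^{\infty}e^{-2\mu t}\|T(t)x\|^{2}\,dt$ is only available \emph{a~priori} for $\mu>\omega_{0}(T)$, while the resolvent bound from hypothesis~$(i)$ lives on the imaginary axis (and a thin strip around it); if $\omega_{0}(T)\ge 1/C$ these regions do not overlap, and one cannot simply ``let $\mu\downarrow 0$''. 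Your two proposed fixes---the Paley--Wiener/Hardy-space identification of $\lambda\mapsto R(\lambda;A)x$ as an $H^{2}$ function on a right half-plane, or reduction to Gearhart's discrete spectral-mapping criterion for $T(1)$---are both legitimate ways to close this gap, and either one is what the cited references actually do. Your concluding passage (closed graph, duality to the adjoint semigroup, the $\|T(t)\|\le K/t$ estimate via Cauchy--Schwarz, and the submultiplicativity argument) is correct and standard.

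In short: your proposal is a sound proof sketch of the cited theorem, but the paper itself offers nothing to compare it with.
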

In order to invoke the above resolvent criterion, we need, as a preliminary
step, to show that the imaginary axis belongs to the resolvent set of the
dynamics operator $A$. This property cannot be freely taken for granted: in
the context of other fluid-structure interactions, it is known that certain
geometrical configurations will give rise to eigenvalues on the imaginary
axis; see, e.g., \cite{dvorak} and \cite{A-T} (and also \cite{avalos-trigg02}, 
where examples of ``non-pathological geometries'' are given).


\subsection{Preliminary step: Spectral Analysis}

Here, we limit ourselves to show that $\lambda =0$ belongs to the resolvent
set $\varrho(A)$; in other words, the resolvent operator is boundedly invertible on the
state space $\mathbf{H}_\rho$. The reader is referred to 
\cite[Section~2]{avalos-bucci-arxiv2013} for a detailed analysis and proof of the fact that
the spectrum has empty intersection with the whole imaginary axis, in the
more challenging case $\rho >0$. 
The arguments used therein can be easily adapted to the case $\rho =0$.

As the parameter $\rho$ equals $0$ throughout, in order to simplify the
notation we set $\mathbf{H_0}=:\mathbf{H}$, as well as $A_0=:A$. 
(We note that $P_\rho$ reduces coincides with the identity operator $I$ throughout.)

\begin{proposition} \label{p:zero-in-the-resolventset} 
The generator $A:{\mathcal{D}}(A)\subset 
\mathbf{H}\rightarrow \mathbf{H}$ is boundedly invertible on $\mathbf{H}$.
Namely, $\lambda =0$ is in the resolvent set of $A$.
\end{proposition}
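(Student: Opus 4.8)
The plan is to show that for every right-hand side $[f_0,g_0,g_1]\in\mathbf{H}$ there is a unique $[u,w_1,w_2]\in\cD(A)$ solving $A[u,w_1,w_2]=[f_0,g_0,g_1]$, together with a bound $\|[u,w_1,w_2]\|_{\cD(A)}\le C\|[f_0,g_0,g_1]\|_{\mathbf{H}}$; since $A$ is closed (it generates a $C_0$-semigroup by Theorem~\ref{t:well}), this gives $0\in\varrho(A)$. Writing out the three rows of \eqref{domain} with $\rho=0$ (so $P_0=I$), the middle row forces $w_2=g_1$, which already lies in $H_0^2(\Omega)$ since $g_1\in V_0\cap(\text{the appropriate space})$; wait—more carefully, the compatibility condition in the definition of $\mathbf{H}$ ties $g_1$ to $f_0\cdot\nu|_\Omega$, so one must keep track of that. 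The remaining unknowns are the fluid field $u$ and the plate displacement $w_1$, governed by the first and third rows, which after undoing the Robin-map/harmonic-extension bookkeeping \eqref{G}–\eqref{p_2} are exactly the stationary version of the coupled PDE system: a Stokes-type problem $-\Delta u+\nabla\pi=-f_0$, $\mathrm{div}\,u=0$ in $\cO$, with $u=0$ on $S$ and $u=(0,0,w_2)=(0,0,g_1)$ on $\Omega$, coupled to $\Delta^2 w_1=\pi|_\Omega - g_0'$-type data on $\Omega$ with clamped boundary conditions, where $\pi$ is the associated pressure.

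The cleanest route is the variational/Babuška–Brezzi approach already cited in the paper for well-posedness of the dynamics (from \cite{avalos-clark}). First I would reduce to homogeneous boundary data for $u$ by subtracting a divergence-free lifting of $(0,0,g_1)$ from $\Omega$ into $\cO$ (vanishing on $S$), which is possible because $\int_\Omega g_1$ can be normalized — here the quotient-by-$\mathbb R$ structure of the displacement/velocity spaces is exactly what makes the lifting's flux condition solvable. Then the core is the bilinear form
\begin{equation*}
a\big([u,w_1],[\phi,\psi]\big)=(\nabla u,\nabla\phi)_{\cO}+(\Delta w_1,\Delta\psi)_\Omega
\end{equation*}
on the constrained space $\{[u,w_1]: u\in\mathbf H^1(\cO),\ \mathrm{div}\,u=0,\ u|_S=0,\ u|_\Omega=(0,0,\psi\text{-trace matching }w_1)\}$ — or, handling the divergence and interface constraints via Lagrange multipliers (the pressure and the interface trace), a saddle-point formulation. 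Coercivity of $a$ on the kernel follows from Poincaré on $\cO$ (using $u|_S=0$) and the clamped-plate Poincaré-type inequality $\|\Delta\psi\|_{L^2(\Omega)}\gtrsim\|\psi\|_{H^2_0(\Omega)}$; the inf–sup condition for the pressure multiplier is the classical Bogovskii/Babuška–Brezzi estimate for Stokes, and the inf–sup for the interface constraint is where the geometry $\nu|_\Omega=(0,0,1)$ and the normal-trace regularity \eqref{p_reg} enter. Lax–Milgram/Babuška–Brezzi then yields existence, uniqueness, and the a priori bound.

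The main obstacle I anticipate is not the coercivity but the interface coupling: one must verify that the weak solution produced by the variational argument actually has the regularity demanded by $\cD(A)$ — namely $u\in\mathbf H^2(\cO)$ and $w_1\in H^4(\Omega)\cap H^2_0(\Omega)$ — and that the Robin maps $R_0,\tilde R_0$ correctly encode the pressure so that the third row of $A$ reproduces the plate equation with the right trace $\pi|_\Omega$. This is a bootstrapping argument: once $u\in\mathbf H^1$ is known and the pressure $\pi$ is identified as the harmonic extension \eqref{e:bvp-for-pressure}, elliptic regularity for Stokes gives $u\in\mathbf H^2$ (hence $\Delta u\cdot\nu|_{\partial\cO}\in H^{-1/2}$ and $\pi\in H^1(\cO)$ as in \eqref{p_reg}), and then $\Delta^2 w_1\in L^2(\Omega)$ with clamped boundary conditions upgrades $w_1$ to $H^4\cap H^2_0$. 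Since the paper explicitly says the $\rho>0$ argument of \cite{avalos-bucci-arxiv2013} "can be easily adapted," I would in practice cite that argument and merely indicate the simplifications afforded by $P_0=I$ and by $V_0=L^2(\Omega)/\mathbb R$ (which removes the $H^1$-trace difficulties present for the Kirchhoff plate), rather than reproduce the saddle-point estimates in full.
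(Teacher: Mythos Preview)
Your variational/Babu\v{s}ka--Brezzi plan is plausible in spirit, but there is a structural confusion that would derail it as written. You set up an interface constraint ``$u|_\Omega=(0,0,\psi\text{-trace matching }w_1)$'' and speak of an inf--sup condition for it; but the domain condition in $\cD(A)$ is $u|_\Omega=(0,0,w_2)$, and the middle row of $A[u,w_1,w_2]=[f_0,g_0,g_1]$ already gives $w_2=g_0$ (not $g_1$), so the boundary value of $u$ on $\Omega$ is pure \emph{data}, not a live coupling to the unknown displacement $w_1$. At $\lambda=0$ the system therefore \emph{decouples}: one first solves the inhomogeneous Stokes problem for $(u,q_0)$ with Dirichlet datum $(0,0,g_0)$ on $\Omega$ (the flux condition $\int_\Omega g_0=0$ holds because $g_0\in H_0^2(\Omega)\cap L^2(\Omega)/\mathbb R$), and only afterwards does the biharmonic problem for $w_1$ see the fluid, through the trace $q_0|_\Omega$. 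This is exactly what the paper does---a direct sequential construction (Stokes, then clamped biharmonic), with no saddle-point machinery beyond the classical Stokes estimate---so your coupled formulation is unnecessarily heavy and, with the wrong constraint, incorrect as stated.

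A second point you do not address is the only genuinely delicate step in the paper's proof: the biharmonic solution $\hat w_1$ of $\Delta^2\hat w_1=q_0|_\Omega-g_1$ need not have zero mean, yet $w_1$ must lie in $H_0^2(\Omega)\cap L^2(\Omega)/\mathbb R$. The paper repairs this by projecting, $w_1:=\mathbb P\hat w_1$, and absorbing the one-dimensional remainder $(I-\mathbb P)\hat w_1$ (whose bilaplacian is a constant, by \eqref{constant}) into the pressure, $\pi_0:=q_0-\Delta^2(I-\mathbb P)\hat w_1$; since Stokes pressures are determined only modulo constants this costs nothing in \eqref{r4}--\eqref{r6-i} but restores the mean-zero constraint on $w_1$ and allows the identification \eqref{pi_0}. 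A variational formulation posed over $H_0^2(\Omega)\cap L^2(\Omega)/\mathbb R$ would handle this implicitly, but you would still need to recognize the multiplier as a pressure shift and verify \eqref{pi_0}; as written you do not mention the issue. Once the decoupling is exploited, the $\mathbf H^2(\cO)$ and $H^4(\Omega)$ regularity you worry about follow directly from the classical Stokes and clamped-biharmonic estimates, so no separate bootstrap is needed.
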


\begin{proof}
Given data $[u^{\ast },w_{1}^{\ast },w_{2}^{\ast }]\in 
\mathbf{H}$, we look for $[u,w_{1},w_{2}]\in {\mathcal{D}}(A)$ which solves 
\begin{equation}
A\left[ 
\begin{array}{c}
u \\ 
w_{1} \\ 
w_{2}%
\end{array}%
\right] =\left[ 
\begin{array}{c}
u^{\ast } \\ 
w_{1}^{\ast } \\ 
w_{2}^{\ast }%
\end{array}%
\right] \,.  \label{resolve}
\end{equation}%
To this end, we must seek for $[u,w_{1},w_{2}]$ in ${\mathcal{D}}(A)$ and
associated pressure $\pi_{0}\in H^{1}({\mathcal{O}})$ which solve 
\begin{subequations}
\label{e:explicit-resolvent-eq}
\begin{align}
& \Delta u-\nabla \pi _{0}=u^{\ast } & & \text{in ${\mathcal{O}}$}
\label{r4} \\
& \text{div}(u)=0 & & \text{in ${\mathcal{O}}$}  
\label{r5} \\
& u=0 & & \text{on $S$}  
\label{r6-ii} \\
& u=(0,0,w_{2}) & & \text{on $\Omega $}  
\label{r6-i} \\
& w_{2}=w_{1}^{\ast } & & \text{in $\Omega $}  
\label{r1} \\
& \Delta ^{2}w_{1}-\pi _{0}\big|_{\Omega }=-w_{2}^{\ast } & & \text{in $\Omega $}  
\label{r2} \\
& w_{1}=\frac{\partial w_{1}}{\partial \nu }=0 & & \text{on $\partial \Omega$.}
\label{r3}
\end{align}
Moreover, we must justify that the pressure variable $\pi _{0}$ above is
given by the expression 
\end{subequations}
\begin{equation} \label{pi_0}
\pi_{0}=G_{1}(w_{1})+G_{2}(u)\,,  
\end{equation}
where we simply denoted by $G_{i}$, $i=1,2$, the operators $G_{0,i}$ defined in \eqref{G} 
(in line with the appearance of $A$ in \eqref{domain}).

\medskip \noindent 
\emph{1. The Plate Velocity. } From \eqref{r1}, the velocity component $w_2$ is immediately resolved.

\medskip \noindent 
\emph{2. The Fluid Velocity. } 
We next consider the Stokes system \eqref{r4}--\eqref{r6-i}. 
From \eqref{r1} and \eqref{r6-ii}-\eqref{r6-i} we have $u|_{\partial {\mathcal{O}}}$ satisfies 
\begin{equation}  \label{cond}
\int_{\partial\mathcal{O}} u\cdot \nu\, d\sigma
=\int_{\Omega } [0,0,u^{3}] \cdot \nu \, d\Omega =\int_{\Omega}w_2\,d\Omega
=\int_{\Omega}w_1^*\,d\Omega =0\,,
\end{equation}
where the last equality follows by the membership $[u^*,w_1^*,w_2^*]\in \mathbf{H}$. 
Since this compatibility condition
is satisfied and data $\{u^*,w_1^*\} \in \mathbf{L}^2({\mathcal{O}})\times
H_0^2(\Omega)$, we can find a unique (fluid and pressure) pair 
$(u,q_0)\in [\mathbf{H}^2({\mathcal{O}})\cap \cH_f] \times 
\mathbf{H}^1(\mathcal{O})/\mathbb{R}$ which solve 
\begin{subequations}
\begin{align}
& \Delta u-\nabla q_0=u^* & \text{in ${\mathcal{O}}$}  \label{s1} \\
& \text{div}(u)=0 & \text{in ${\mathcal{O}}$}  \label{s2} \\
& u=0 & \text{on $S$}  \label{s3-ii} \\
& u=(0,0,w_1^*) & \text{on $\Omega$}.  \label{s3-i}
\end{align}
\end{subequations}
Moreover, one has the estimate 
\begin{equation}  \label{stokes}
\| u\|_{\mathbf{H}^2(\cO)\cap \mathcal{H}_{f}} +\|q_0\|_{\mathbf{H}^1({\mathcal{O}})/\mathbb{R}} \le C\big[ \|u^*\|_{\mathcal{H}_f}
+\|w_1^*\|_{H_0^2(\Omega)}\big]
\end{equation}
(see e.g., \cite[Proposition 2.3, p.~25]{temam}).

\medskip \noindent 
\emph{3. The Mechanical Displacement. } 
Subsequently, we consider the plate component boundary value problem (BVP) 
\eqref{r2}-\eqref{r3}. 
By ellipticity and elliptic regularity (see \cite{L-M}) there exists a solution 
$\hat{w}_{1}\in H^{4}(\Omega )\cap H_{0}^{2}(\Omega )$ to the problem 
\begin{equation}
\begin{cases}
\Delta ^{2}\hat{w}_{1}=q_{0}|_{\Omega }-w_{2}^{\ast } & \text{in $\Omega $}
\nonumber \\[1mm] 
\hat{w}_{1}=\displaystyle\frac{\partial \hat{w}_{1}}{\partial \nu }=0 & 
\text{on $\partial \Omega $}
\end{cases}
\label{p_1}
\end{equation}
where $q_{0}$ is the pressure variable in \eqref{s1}. Moreover, we have the
estimate 
\begin{align}
\Vert \hat{w}_{1}\Vert _{H^{4}(\Omega )\cap H_{0}^{2}(\Omega )}& \leq
C\,\Vert q_{0}|_{\Omega }+w_{2}^{\ast }\Vert _{L^{2}(\Omega )}  \notag \\
& \leq C\,\Vert q_{0}|_{\Omega }\Vert _{H^{1/2}(\Omega )}+\Vert w_{2}^{\ast
}\Vert _{L^{2}(\Omega )}  
\notag \\
& \leq C\,\Vert [u^*,w_1^*,w_2^*]\Vert_{\mathbf{H}}  
\label{plate_hat}
\end{align}
(in the last inequality we have also invoked Sobolev Trace Theory and \eqref{stokes}).

Now if, as in \cite{chu-ryz}, we let $\mathbb{P}$ denote the orthogonal
projection of $H_0^2(\Omega)$ onto $H_0^2(\Omega)\cap L^2(\Omega)/\mathbb{R}$
-- orthogonal with respect to the inner product 
$[\omega,\tilde{\omega}]\rightarrow (\Delta \omega,\Delta \tilde{\omega})_{\Omega }$ --, then one
can readily show that its orthogonal complement $I-\mathbb{P}$ can be
characterized as 
\begin{equation}  \label{constant}
\begin{split}
& (I-\mathbb{P})H_0^2(\Omega)=\text{Span}\{\varphi \}\,, \quad \text{where}
\\
& \qquad\qquad\qquad \qquad\qquad\qquad\Big\{\Delta^2\varphi =1 \; \text{in $\Omega$}, 
\; \varphi=\frac{\partial \varphi }{\partial\nu }=0 \; \text{on $\partial\Omega$}\Big\}\,.
\end{split}
\end{equation}
(see \cite[Remark~2.1, p.~6]{chu-ryz}).
With these projections, we then set 
\begin{align}  \label{assign}
& w_1 =\mathbb{P}\hat{w}_1  \notag \\
& \pi_0 = q_0-\Delta^2(I-\mathbb{P})\hat{w}_1\,.
\end{align}
With this assignment of variables, then by \eqref{p_1} and $\hat{w}_{1}= 
\mathbb{P}\hat{w}_1+(I-\mathbb{P})\hat{w}_1$, we will have that $w_1$ solves 
\eqref{r2}--\eqref{r3}. (And of course since $\pi_0$ and $q_0$ differ only
by a constant, then the pair $(u,\pi_0)$ also solves \eqref{r4}--\eqref{r6-i}.)

Moreover, from elliptic theory, \eqref{stokes} and \eqref{plate_hat}, we
have the estimate 
\begin{equation}  \label{plate}
\begin{split}
& \|w_1\|_{H^4(\Omega )\cap H_0^2(\Omega )\cap L^2(\Omega)/\mathbb{R}}
+\|\pi_0\|_{H^1({\mathcal{O}})}\le  \notag \\[1mm]
& \qquad\qquad\qquad \le C\,\big(\|\Delta^2(I-\mathbb{P})\hat{w}%
_1\|_{L^2(\Omega)} +\|q_0\|_{H^1({\mathcal{O}})/\mathbb{R}%
}+\|w_2^*\|_{L^2(\Omega)}\big)  \notag \\[1mm]
& \qquad\qquad\qquad \le C\,\|[u^*,w_1^*,w_2^*]\|_{\mathbf{H}}\,,
\end{split}
\end{equation}
where implicitly we are also using the fact that $\Delta^2(I-\mathbb{P})\in {%
\mathcal{L}}(H_0^2(\Omega ),\mathbb{R})$, by the Closed Graph Theorem.

\medskip \noindent 
\emph{4. Resolution of the Pressure. } 
As we noted in \eqref{p_reg} we have $\Delta u\cdot \nu \in H^{-\frac{1}{2}}(\partial 
\mathcal{O})$, with the estimate 
\begin{equation} \label{v_tr}
\Vert \Delta u\cdot \nu |_{\Omega }\Vert _{H^{-1/2}(\partial {\mathcal{O}}
)} \le C\,\Vert u\Vert _{\mathbf{H}^{2}({\mathcal{O}})} 
\le C\,\big[\Vert u^{\ast }\Vert _{\mathcal{H}_{f}}+\Vert w_{1}^{\ast}\Vert _{H_{0}^{2}(\Omega )}\big]\,,  
\end{equation}
where for the second inequality we have also used (\ref{stokes}).

We will apply this estimate to the pressure variable $\pi _{0}$ in %
\eqref{e:explicit-resolvent-eq} 
-- given explicitly in \eqref{assign} -- which solves \textit{a fortiori}  
\begin{equation*} \label{abc}
\begin{cases}
\Delta \pi _{0}=0 & \text{in ${\mathcal{O}}$} 
\\[1mm] 
\frac{\partial \pi _{0}}{\partial \nu }=\Delta u\cdot \nu |_{S} & \text{on $S$}
\\[1mm]
\frac{\partial \pi _{0}}{\partial \nu }+\pi _{0}=\Delta ^{2}w_{1}+\Delta
u^{3}|_{\Omega } & \text{on $\Omega $} 
\end{cases}\quad.
\end{equation*}

In fact: Applying the divergence operator to both sides of \eqref{r4} and
using $\text{div}(\Delta u)=\text{div}(u^{\ast })=0$, we obtain that $\pi
_{0}$ is harmonic in ${\mathcal{O}}$. Moreover, dotting both sides of %
\eqref{r4} with repect to the normal vector, and subsequently taking the
boundary trace to the portion $S$, we get the boundary condition on $S$
(implicitly we are also using $u^{\ast }\cdot \nu |_{S}=0$, as $[u^{\ast
},w_{1}^{\ast },w_{2}^{\ast }]\in \mathbf{H}$). 
Finally, as $u^{\ast }\cdot \nu |_{\Omega }=w_{2}^{\ast }$, and as 
$[u^{\ast },w_{1}^{\ast },w_{2}^{\ast}]\in \mathbf{H}$, we have from \eqref{r2} 
\begin{equation*}
\pi _{0}|_{\Omega } =w_{2}^{\ast }+\Delta ^{2}w_{1} 
 =\Delta u\cdot \nu |_{\Omega }-\nabla \pi _{0}\cdot \nu |_{\Omega }+\Delta^{2}w_{1}\,,
\end{equation*}
which gives the boundary condition 
on $\Omega $ in \eqref{abc}. 
Necessarily then, the pressure term must be given by the expression 
\begin{equation} \label{d}
\pi _{0}=G_{0,1}(w_{1})+G_{0,2}(u)\in H^{1}({\mathcal{O}})
\end{equation}
(with the well-definition of right hand side assured by \eqref{v_tr}).

\medskip Finally, we collect: (i) \eqref{s1}--\eqref{s3-i} and 
\eqref{stokes} (for the fluid variable $u$); (ii) \eqref{p_1} and \eqref{r1}
and \eqref{assign} (for the respective structure and pressure variables 
$w_{1}$, $w_{2}$ and $\pi_{0}$); (iii) \eqref{plate} and \eqref{d}
(for the characterization of the pressure term $\pi _{0}$). In this way we
have obtained the solution of \eqref{e:explicit-resolvent-eq}-\eqref{pi_0}
in ${\mathcal{D}}(A)$. In short, $0\in \varrho (A)$, which concludes the proof.
\end{proof}


\subsection{Proof of the Main Result Theorem~\ref{t:main}}

\emph{Proof of Theorem~\ref{t:main}. } 
By Theorem \ref{t:pruess}, the fluid structure semigroup $\{e^{At}\}_{t\geq 0}$ will be uniformly stable provided its associated resolvent operator $R(\lambda ;A)$ is bounded on the
imaginary axis; viz., 
\begin{equation} \label{estimate}
\Vert R(i\beta ;A)\Vert _{\mathbf{H}}\le C\qquad \text{for all $\beta \in \mathbb{R}$}.  
\end{equation}

\smallskip
By way of establishing \eqref{estimate}, we consider the
following resolvent equation, for $\beta \in \mathbb{R}\setminus \{0\}$
(recall that we have already established that $0\in \varrho (A)$): Given data $%
[u^{\ast },w_{1}^{\ast },w_{2}^{\ast }]\in \mathbf{H}$, we look for $%
[w_{1},w_{2},u]\in {\mathcal{D}}(A)$ which solves 
\begin{equation}
(i\beta -A)\left[ 
\begin{array}{c}
u \\ 
w_{1} \\ 
w_{2}%
\end{array}%
\right] =\left[ 
\begin{array}{c}
u^{\ast } \\ 
w_{1}^{\ast } \\ 
w_{2}^{\ast }%
\end{array}%
\right] \,.  \label{resolve2}
\end{equation}%
(Here, we also take $|\beta |\geq 1$, as $|\beta |<1$ is relatively
straightfoward.)

From \eqref{domain} and \eqref{e:domain-of-generator} we see then that $%
[u,w_{1},w_{2}]\in {\mathcal{D}}(A)$ satisfies the following PDE system 
\begin{subequations}
\label{PDE}
\begin{align}
& i\beta u-\Delta u+\nabla p=u^{\ast }\in {\mathcal{H}}_{f} & &
\label{subpde-mechanical} \\
& \text{div}(u)=0 & & \text{in $\cO$} \\
& u=0 & & \text{on $S$} \\
& u=(u^{1},u^{2},u^{3})=(0,0,i\beta w_{1}-w_{1}^{\ast }) & & \text{on $%
\Omega $} \\
& i\beta w_{1}-w_{2}=w_{1}^{\ast }\in \big[H_{0}^{2}(\Omega )\cap
L^{2}(\Omega )/\mathbb{R}\big] & & \\
& -\beta ^{2}w_{1}+\Delta ^{2}w_{1}-p|_{\Omega }=w_{2}^{\ast }+i\beta
w_{1}^{\ast }\in L^{2}(\Omega )/\mathbb{R} & & \\
& w_{1}\big|_{\partial \Omega }=\frac{\partial w_{1}}{\partial n}\Big|%
_{\partial \Omega }=0\,. & &
\end{align}

\bigskip \noindent \emph{Step 1. (A relation for the fluid gradient)} We
start by taking the $\mathbf{H}$-inner product of both sides of %
\eqref{resolve2}, with respect to $[u,w_{1},w_{2}]$. This gives 
\end{subequations}
\begin{equation*}
i\beta \left\Vert \left[ 
\begin{array}{c}
u \\ 
w_{1} \\ 
w_{2}%
\end{array}%
\right] \right\Vert _{\mathbf{H}}^{2}-\left( A\left[ 
\begin{array}{c}
u \\ 
w_{1} \\ 
w_{2}%
\end{array}%
\right] \,,\left[ 
\begin{array}{c}
u \\ 
w_{1} \\ 
w_{2}%
\end{array}%
\right] \right) _{\mathbf{H}}=\left( \left[ 
\begin{array}{c}
u^{\ast } \\ 
w_{1}^{\ast } \\ 
w_{2}^{\ast }%
\end{array}%
\right] \,,\left[ 
\begin{array}{c}
u \\ 
w_{1} \\ 
w_{2}%
\end{array}%
\right] \right) _{\mathbf{H}}\,.
\end{equation*}%
Combining this with the readily derivable relation 
\begin{equation}
\left( A\left[ 
\begin{array}{c}
u \\ 
w_{1} \\ 
w_{2}%
\end{array}%
\right] \,,\left[ 
\begin{array}{c}
u \\ 
w_{1} \\ 
w_{2}%
\end{array}%
\right] \right) _{\mathbf{H}}=-\Vert \nabla u\Vert _{{\mathcal{O}}}^{2}-2i\,%
\text{Im}\big(\Delta w_{1},\Delta w_{2}\big)_{\Omega }
\label{e:dissipativity}
\end{equation}
(see \cite{avalos-bucci-arxiv2013}) we then will have the following
\textquotedblleft static dissipation\textquotedblright : 
\begin{equation}
\left\Vert \nabla u\right\Vert _{L^{2}(\mathcal{O})}^{2}=\text{Re}\left( %
\left[ 
\begin{array}{c}
u^{\ast } \\ 
w_{1}^{\ast } \\ 
w_{2}^{\ast }%
\end{array}%
\right] ,\left[ 
\begin{array}{c}
u \\ 
w_{1} \\ 
w_{2}%
\end{array}%
\right] \right) _{\mathbf{H}}\,.  \label{grad}
\end{equation}

This gives then, for arbitrary $\epsilon >0$, 
\begin{equation}  \label{est1}
\left\| \nabla u\right\|_{L^2({\mathcal{O}})}\le \epsilon \left\|\left[ 
\begin{array}{c}
u \\ 
w_1 \\ 
w_2%
\end{array}
\right] \right\|_{\mathbf{H}}+C_{\epsilon }\left\|\left[ 
\begin{array}{c}
u^* \\ 
w_1^* \\ 
w_2^*%
\end{array}
\right] \right\|_{\mathbf{H}}\,.
\end{equation}

\medskip \noindent 
\emph{Step 2. (Control of the mechanical velocity)} 
This comes quickly: using the fluid Dirichlet boundary condition in \eqref{PDE}
we have 
\begin{equation*}
i\beta w_{1}=u^{3}\big|_{\Omega }-w_{1}^{\ast }\,.
\end{equation*}%
We estimate this expression by invoking in sequence, the Sobolev Embedding
Theorem, Poincar\'{e} Inequality and \eqref{est1}. In this way, we then
obtain 
\begin{equation}
\begin{split}
\Vert \beta w_{1}\Vert _{H^{1/2}(\Omega )}& \leq \Vert \left.
u^{3}\right\vert _{\Omega }-w_{1}^{\ast }\Vert _{H^{1/2}(\Omega )} \\[1mm]
& \leq C\left( \,\Vert \nabla u\Vert _{L^{2}({\mathcal{O}})}+\Vert
w_{1}^{\ast }\Vert _{H_{0}^{2}(\Omega )}\right) \\[1mm]
& \leq \epsilon C\,\left\Vert \left[ 
\begin{array}{c}
u \\ 
w_{1} \\ 
w_{2}%
\end{array}%
\right] \right\Vert _{\mathbf{H}}+C_{\epsilon }\left\Vert \left[ 
\begin{array}{c}
u^{\ast } \\ 
w_{1}^{\ast } \\ 
w_{2}^{\ast }%
\end{array}%
\right] \right\Vert _{\mathbf{H}}\,.
\end{split}
\label{est2}
\end{equation}

Using subsequently the resolvent relation $w_{2}=i\beta w_{1}-w_{1}^{\ast }$
now gives 
\begin{equation}
\Vert \beta w_{1}\Vert _{H^{1/2}(\Omega )}+\Vert w_{2}\Vert _{H^{1/2}(\Omega
)}\leq \epsilon C\,\left\Vert \left[ 
\begin{array}{c}
u \\ 
w_{1} \\ 
w_{2}%
\end{array}%
\right] \right\Vert _{\mathbf{H}}+C_{\epsilon }\left\Vert \left[ 
\begin{array}{c}
u^{\ast } \\ 
w_{1}^{\ast } \\ 
w_{2}^{\ast }%
\end{array}%
\right] \right\Vert _{\mathbf{H}}\,.  \label{est3}
\end{equation}

\medskip \noindent 
\emph{Step 3. (Control of the mechanical displacement)}
We multiply both sides of the mechanical equation \eqref{subpde-mechanical}
by $w_1$ and integrate. This gives the relation 
\begin{equation}  \label{mech}
\left( \Delta^2 w_1,w_1\right)_{L^2(\Omega)}= \big(p|_{\Omega},w_1\big)%
_{\Omega }+\beta^2\left\| w_1\right\|_{L^2(\Omega)}^2 +\big(w_2^*+i\beta
w_1^*,w_1\big)_{L^2(\Omega)}\,.
\end{equation}
To handle the first term we use the fact that since $[w_1,w_2,u]\in \mathbf{H%
}$, then in particular 
\begin{equation*}
\int_{\Omega}w_1\, d\Omega =0\,.
\end{equation*}
In consequence, one has well-posedness of the following boundary value
problem (see \cite[Proposition 2.2]{temam}): 
\begin{equation}  \label{BVP}
\begin{cases}
-\Delta \psi +\nabla q=0 & \text{in ${\mathcal{O}}$} \\ 
\text{div}\psi =0 & \text{in ${\mathcal{O}}$} \\ 
\psi|_S=0 & \text{on $S$} \\ 
\psi|_\Omega=\big(\psi^1,\psi^2,\psi^3\big)\big|_{\Omega }=(0,0,w_1) & \text{%
on $\Omega$}%
\end{cases}%
\end{equation}
with the estimate 
\begin{equation}  \label{bvp_est}
\left\|\nabla \psi \right\|_{\mathbf{L}^2({\mathcal{O}})} +\|q\|_{L^2({%
\mathcal{O}})}\le C\,\|w_1\|_{H^{1/2}(\Omega)}
\end{equation}
(implicitly, we are also using Poincar\'e Inequality).

\smallskip With this solution variable $\psi $ in hand, we now address the
first term on the right hand side of \eqref{mech}: since normal vector $\nu $
equals $(0,0,1)$ on $\Omega $ (and as fluid variable $u$ is divergence
free), we have 
\begin{align}
(p|_{\Omega },w_{1})_{\Omega }& =-\left( \frac{\partial u}{\partial \nu },%
\left[ 
\begin{array}{c}
0 \\ 
0 \\ 
w_{1}%
\end{array}%
\right] \right) _{\mathbf{L}^{2}(\Omega )}+\left( p\,\nu ,\left[ 
\begin{array}{c}
0 \\ 
0 \\ 
w_{1}%
\end{array}%
\right] \right) _{\mathbf{L}^{2}(\Omega )}  \notag \\
& =-\left( \frac{\partial u}{\partial \nu },\psi \right) _{\mathbf{L}%
^{2}(\partial {\mathcal{O}})}+\left( p\,\nu ,\psi \right) _{\mathbf{L}%
^{2}(\partial {\mathcal{O}})}\,,  \label{p1}
\end{align}%
after invoking the boundary conditions in \eqref{BVP}.

The use of Green's Identities and the fluid equation in \eqref{p1} then
gives 
\begin{align*}
(p|_{\Omega },w_{1})_{\Omega }& =-\left( \frac{\partial u}{\partial \nu }%
,\psi \right) _{\mathbf{L}^{2}(\partial {\mathcal{O}})}+(p\,\nu ,\psi )_{%
\mathbf{L}^{2}(\partial {\mathcal{O}})} \\
& =-(\Delta u,\psi )_{\mathbf{L}^{2}({\mathcal{O}})}-\big(\nabla u,\nabla
\psi \big)_{\mathbf{L}^{2}({\mathcal{O}})}+\left( \nabla p,\psi \right) _{%
\mathbf{L}^{2}({\mathcal{O}})} \\
& =-i\beta (u,\psi )_{\mathbf{L}^{2}({\mathcal{O}})}-\left( \nabla u,\nabla
\psi \right) _{\mathbf{L}^{2}({\mathcal{O}})}+(u^{\ast },\psi )_{\mathbf{L}%
^{2}({\mathcal{O}})}\,.
\end{align*}%
Estimating the latter right hand side by means of \eqref{est1}, \eqref{est2}
and \eqref{bvp_est}, we get for $\left\vert \beta \right\vert >1,$%
\begin{align}
\left\Vert \left( p|_{\Omega },w_{1}\right) _{\Omega }\right\vert & \leq
C\,|\beta |\left\Vert \nabla \psi \right\Vert _{\mathbf{L}^{2}({\mathcal{O}}%
)}\,\left( \left\Vert \nabla u\right\Vert _{\mathbf{L}^{2}({\mathcal{O}}%
)}+\Vert u^{\ast }\Vert _{\mathbf{L}^{2}({\mathcal{O}})}\right)  \notag \\
& \leq \epsilon \,C\,\left\Vert \left[ 
\begin{array}{c}
u \\ 
w_{1} \\ 
w_{2}%
\end{array}%
\right] \right\Vert _{\mathbf{H}}+C_{\epsilon }\left\Vert \left[ 
\begin{array}{c}
u^{\ast } \\ 
w_{1}^{\ast } \\ 
w_{2}^{\ast }%
\end{array}%
\right] \right\Vert _{\mathbf{H}}\,.  \label{est4}
\end{align}

Applying the obtained estimate \eqref{est4} to the right hand side of %
\eqref{mech}, and using once more \eqref{est2}, we get 
\begin{equation*}
\left( \Delta ^{2}w_{1},w_{1}\right) _{L^{2}(\Omega )}\leq \epsilon
C\,\left\Vert \left[ 
\begin{array}{c}
u \\ 
w_{1} \\ 
w_{2}%
\end{array}%
\right] \right\Vert _{\mathbf{H}}+C_{\epsilon }\left\Vert \left[ 
\begin{array}{c}
u^{\ast } \\ 
w_{1}^{\ast } \\ 
w_{2}^{\ast }%
\end{array}%
\right] \right\Vert _{\mathbf{H}}\,.
\end{equation*}%
Another application of Green's formula, along the fact that $w_{1}$
satisfies hinged boundary conditions, gives 
\begin{equation}
\Vert \Delta w_{1}\Vert _{L^{2}(\Omega )}^{2}\leq \epsilon C\left\Vert \left[
\begin{array}{c}
u \\ 
w_{1} \\ 
w_{2}%
\end{array}%
\right] \right\Vert _{\mathbf{H}}+C_{\epsilon }\left\Vert \left[ 
\begin{array}{c}
u^{\ast } \\ 
w_{1}^{\ast } \\ 
w_{2}^{\ast }%
\end{array}%
\right] \right\Vert \,.  \label{est5}
\end{equation}

Thus, combining \eqref{est1}, \eqref{est3} and \eqref{est5} yields now 
\begin{equation*}
\left\Vert \lbrack u,w_{1},w_{2}]\right\Vert _{\mathbf{H}}^{2}\leq \epsilon
C\,\left\Vert [u,w_{1},w_{2}]\right\Vert _{\mathbf{H}}^{2}+C_{\epsilon
}\left\Vert [u^{\ast },w_{1}^{\ast },w_{2}^{\ast }]\right\Vert _{\mathbf{H}%
}^{2}\,.
\end{equation*}%
This yields the required uniform norm estimate \eqref{estimate}, upon taking 
$\epsilon >0$ small enough. The proof of exponential decay of finite energy
solutions of \eqref{e:pde-model}-\eqref{ic} is now complete. 

\qed


\section{The associated optimal control problems: relevant scenarios,
expected difficulties}

In this section we briefly discuss a couple of possible implementations for
the placement of control actions into the PDE system \eqref{e:pde-model}-%
\eqref{ic}; these are complemented with some remarks about the technical
challenges which are expected in the forthcoming study of the associated
optimal control problems (with quadratic functionals).

\subsection{A first setup: point control on the mechanical component}

A classical scenario worth studying is the case of \emph{point control}
exerted on the elastic wall $\Omega$. The control action may be
mathematically described by 
\begin{equation*}
{\mathcal{B}} g = \sum_{j=1}^J a_j\,g_j\delta_{\xi_j}\,,
\end{equation*}
where $\xi_j$ are points in $\Omega$, and $\delta_{\xi_j}$ denote the
corresponding \emph{delta functions}. The control space here is $U=\mathbb{R}%
^J$ and 
\begin{equation*}
{\mathcal{B}}: U \to H^{-1-\sigma}(\Omega)\,, \; \sigma>0\,;
\end{equation*}
accordingly, the initial/boundary value problem (IBVP) is as follows:

\begin{equation}  \label{e:control-pbm_1}
\begin{cases}
u_t-\Delta u +\nabla p= 0 & \quad\text{in }\; {\mathcal{O}}\times (0,T) \\ 
\text{div}\,u=0 & \quad\text{in }\; {\mathcal{O}}\times (0,T) \\ 
u=0 & \quad\text{on }\, S\times (0,T) \\ 
u=(u^1,u^2,u^3)=(0,0,w_t) & \quad\text{on }\, \Omega\times (0,T) \\ 
w_{tt} -\rho\Delta w_{tt} +\Delta^2 w = p|_{\Omega}+\boxed{\cB g} & \quad%
\text{in }\; \Omega\times (0,T) \\ 
w=\frac{\partial w}{\partial\nu}=0 & \quad\text{on }\, \partial\Omega\times
(0,T) \\ 
u(\cdot,0)=u_0 & \quad\text{in }\; {\mathcal{O}} \\ 
w(\cdot,0)=w_0\,, \; w_t(\cdot,0)=w_1 & \quad\text{in }\; \Omega\,.%
\end{cases}%
\end{equation}
To deal with the IBVP problem \eqref{e:control-pbm_1}, with the natural
state space $Y$ given by $\mathbf{H}_\rho$ and the control space $U$ defined
above, we will appeal to the the corresponding abstract formulation 
\begin{equation}  \label{e:abstract-system}
\begin{cases}
y^{\prime }(t)=Ay(t)+Bg(t)\,, & 0< t< T \\[1mm] 
y(0)=y_0\in Y\,, & 
\end{cases}%
\end{equation}
where

\begin{itemize}
\item $A:{\mathcal{D}}(A) \subset Y \to Y$ is the infinitesimal generator of
a $C_0$-semigroup $e^{At}$ on $Y$, $t\ge 0$;

\item $B\in {\mathcal{L}}(U,[{\mathcal{D}}(A^*)]^{\prime })$; equivalently, $%
A^{-1}B\in {\mathcal{L}}(U,Y)$.
\end{itemize}

The most prominent features of the controlled dynamics is that (i) the
control operator $B$ is \emph{not bounded} from $U$ into $Y$; (ii) the
semigroup $e^{At}$ is \emph{not analytic}; in fact the semigroup results
from a strong coupling of analytic Stokes flow with \emph{hyperbolic} plate
dynamics.

To the control system \eqref{e:abstract-system} we associate a quadratic
functional: 
\begin{equation} \label{e:functional-general}
J(g)=\int_{0}^{T}\big(\Vert Ry(t)\Vert _{Z}^{2}+\Vert g(t)\Vert _{U}^{2}\big)\,dt\,,  
\end{equation}
where $R\in {\mathcal{L}}(Y,Z)$ denotes the \emph{observation operator} and $Z$ the observation
space; thus, the optimal control problem is formulated as follows.

\begin{problem}[The optimal control problem]
Given $y_{0}\in Y$, we seek a control function $g\in L^{2}(0,T;U)$ which
minimizes the cost functional \eqref{e:functional-general}, where 
$y(\cdot)=y(\cdot \,;y_{0},g)$ is the solution to \eqref{e:abstract-system}
corresponding to $g(\cdot )$.
\end{problem}

As is well known, the core of the work is to pinpoint the regularity of the
input-to-state map 
\begin{equation*}
L:\,g(\cdot )\mapsto (Lg)(t):=\int_{0}^{t}e^{A(t-s)}Bg(s)\,ds\,,
\end{equation*}%
which, in turn, is related to the \emph{regularity properties} of the kernel 
$e^{At}B$ (or, equivalently, of $B^{\ast }e^{A^{\ast }t}$).

We expect to make use of the sharp regularity theory for the uncoupled plate
equation in the presence of point control; see e.g., \cite{triggiani-1993}. 
We also expect that, with the possible exception of the one-dimensional case
for $\Omega $, the presence of point control acting on the {\em hyperbolic}
component of the PDE system will prevent one from establishing that the \emph{gain operator}
is bounded -- unbounded even on a dense subset of $Y$ --, unless the
observation operator $R$ possesses appropriate smoothing properties.

\subsection{A different setup: boundary control on the fluid component}

Another interesting scenario is the case of \emph{boundary} control acting
on some part $\Sigma$ of $S$. A tentative condition\footnote{
This condition was suggested by Giovanna Guidoboni (IUPUI; also 
Acting Co-Director of the \emph{School of Science Institute of Mathematical
Modeling and Computational Science}, Indianapolis), 
in connection with the modeling of ocular blood flow and specifically with
the issue of reducing the ocular pressure.} to be taken into consideration
is $u\cdot \nu=g$ on $\Sigma$.
The IBVP becomes as follows: 
\begin{equation}
\begin{cases}
u_{t}-\Delta u+\nabla p=0 & \quad \text{in }\;{\mathcal{O}}\times (0,T) \\ 
\text{div}\,u=0 & \quad \text{in }\;{\mathcal{O}}\times (0,T) \\ 
u=0 & \quad \text{on }\,S\setminus \Sigma \times (0,T) \\ 
u\cdot \nu =\boxed{g}\,,\quad u\cdot \tau =0\,, & \quad \text{on }\,\Sigma
\times (0,T) \\ 
u=(u^{1},u^{2},u^{3})=(0,0,w_{t}) & \quad \text{on }\,\Omega \times (0,T) \\ 
w_{tt}-\rho \Delta w_{tt}+\Delta ^{2}w=p|_{\Omega } & \quad \text{in }%
\;\Omega \times (0,T) \\ 
w=\frac{\partial w}{\partial \nu }=0 & \quad \text{on }\,\partial \Omega
\times (0,T) \\ 
u(\cdot ,0)=u_{0} & \quad \text{in }\;{\mathcal{O}} \\ 
w(\cdot ,0)=w_{0}\,,\;w_{t}(\cdot ,0)=w_{1} & \quad \text{in }\;\Omega \,.%
\end{cases}
\label{e:control-pbm_2}
\end{equation}%
Notice that in the present case one has $U=L^{2}(\Sigma )$ and $B:g\mapsto
Bg=({\mathcal{B}}g,0,0)$, where ${\mathcal{B}}$ is an appropriate operator
which describes the particular normal component control action.
The presence of boundary control acting on the {\em parabolic} component of the PDE system
suggests that we investigate whether the optimal control theory of \cite{abl-2005, abl-2013} is applicable. The analyis to be performed will require
that appropriate regularity results for the boundary traces of the fluid
component on $\Sigma$ are established.

\begin{remark}
\begin{rm}
If the aforesaid theory applies, we will achieve well-posedness of
Riccati equations with \emph{bounded} gains (on a dense subset of $Y$),
without requiring smoothing effects of the observation operator $R$. 
This has been done successfully in the case of a different F-S model (solid
immersed in a fluid); see \cite{bucci-las-fs_1} and \cite{bucci-las-fs_2}.
(See \cite{las-tuff-1} and \cite{las-tuff-2} for a study of a Bolza problem associated 
with the very same PDE model.) 
\end{rm}
\end{remark}




\begin{thebibliography}{99}

\bibitem{abl-2005} P.~Acquistapace, F.~Bucci and I.~Lasiecka, Optimal
boundary control and Riccati theory for abstract dynamics motivated by
hybrid systems of PDEs, \emph{Adv. Differential Equations} \textbf{10}
(2005), no.~12, 1389-1436.

\bibitem{abl-2013} P.~Acquistapace, F.~Bucci and I.~Lasiecka, A theory of
the infinite horizon LQ-problem for composite systems of PDEs with boundary
control. \emph{SIAM J. Math. Anal.} \textbf{45} (2013), no.~3, 1825-1870.

\bibitem{avalos-bucci-arxiv2013} G.~Avalos and F.~Bucci, Spectral analysis
and rational decay rates of strong solutions to a fluid-structure PDE
system, \emph{e-Print} arXiv:1312.4812 [math.AP], December 2013, 19 pp.

\bibitem{avalos-clark} G.~Avalos and T.~Clark, Well-posedness and backward
uniqueness for a PDE model of fluid-structure interaction, \emph{e-Print}
arXiv:1311.4254v1~[math.AP], 2013, and to appear in \emph{Evol.~Equ. Control
Theory}.

\bibitem{dvorak} G.~Avalos and M.~Dvorak, A new maximality argument for a
coupled fluid-structure interaction, with implications for a divergence-free
finite element method, \emph{Appl. Math. (Warsaw)} \textbf{35} (2008),
no.~3, 259-280.

\bibitem{A-T} G.~Avalos and R. Triggiani, The Coupled PDE System Arising in
Fluid-Structure Interaction, Part I: Explicit Semigroup Generator and its
Spectral Properties, \emph{Contemporary Mathematics}, Volume \textbf{440},
(2007), pp. 15-54.

\bibitem{avalos-trigg02} G.~Avalos and R.~Triggiani, Fluid structure
interaction with and without internal dissipation of the structure: A
contrast study in stability, \emph{Evol.~Equ. Control Theory}, \textbf{2}
(2013), no.~4, 563-598.

\bibitem{bucci-las-fs_1} F.~Bucci and I.~Lasiecka, Optimal boundary control
with critical penalization for a PDE model of fluid-solid interactions, 
\emph{Calc. Var. Partial Differential Equations} \textbf{37} (2010), no.~1-2, 217-235.

\bibitem{bucci-las-fs_2} F.~Bucci and I.~Lasiecka, Regularity of boundary
traces for a fluid-solid interaction model, \emph{Discrete Contin. Dyn. Syst.~Ser.~S} 
\textbf{4} (2011), no.~3, 505-521.

\bibitem{chambolle} 
A.~Chambolle, B.~Desjardins, M.J.~Esteban and C.~Grandmont, 
Existence of weak solutions for the unsteady interaction of a
viscous fluid with an elastic plate, \emph{J.~Math. Fluid Mech} \textbf{7}
(2005), no.~3, 368-404.

\bibitem{chu-ryz} I.~Chueshov and I.~Ryzhkova, A global attractor for a
fluid-plate interaction model, \emph{Comm.~Pure Appl. Anal.} \textbf{12}
(2013), no.~4, 1635-1656.

\bibitem{const} P.~Constantin and C.~Foias, 
``Navier-Stokes Equations'',
The University of Chicago Press, Chicago, 1988.

\bibitem{grisvard} P.~Grisvard, Caracterization de quelques espaces
d'interpolation, \emph{Arch. Rational Mech. Anal.} \textbf{25} (1967), 40-63.

\bibitem{kesavan} S.~Kesavan, ``Topics in Functional Analysis and
Applications'', John Wiley \& Sons, Inc., New York, 1989.

\bibitem{las-tuff-1}
I.~Lasiecka and A.~Tuffaha,
A Bolza optimal synthesis problem for singular estimate control systems,
{\em Control Cybernet.} {\bf 38} (2009), no.~4B, 1429-1460.

\bibitem{las-tuff-2} 
I.~Lasiecka and A.~Tuffaha,
Riccati theory and singular estimates for a Bolza control problem arising in linearized fluid-structure interaction, {\em Systems Control Lett.} {\bf 58} (2009), no.~7, 499-509.

\bibitem{L-M} J.L.~Lions and E.~Magenes, \textquotedblleft Non-homogeneous
boundary value problems and applications\textquotedblright , Vol.~I,
Springer-Verlag, 1972.

\bibitem{liu2} Z. Liu and S. Zheng, Exponential stability of the Kirchhoff
plate with thermal or viscoelastic damping, 
\emph{Quart. Appl. Math.} {\bf 55} (1997), no.~3, 551-564.

\bibitem{pruess} J.~Pr\"{u}ss, On the spectrum of $C_{0}$-semigroups, \emph{%
Trans. Amer. Math. Soc.} \textbf{284} (1984), no.~2, 847-857.

\bibitem{temam} R.~Temam, ``Navier-Stokes Equations. Theory and Numerical
Analysis'', AMS Chelsea Publishing, Providence RI, 2001.

\bibitem{triggiani-1993} R.~Triggiani, Interior and boundary regularity with
point control, Part I: Wave and Euler-Bernoulli equations, \emph{Diff. Int.
Eqn.} \textbf{6} (1993), 111-129.

\end{thebibliography}
\end{document}